\pgfplotsset{compat=1.18}
\newtheorem{theorem}{Theorem}
\newtheorem{definition}{Definition}
\newtheorem{lemma}[theorem]{Lemma}
\newtheorem{question}[theorem]{Question}
\newcommand{\R}{\mathbb{R}}
\newcommand{\ip}[2]{\left\langle #1,#2 \right\rangle}
\newcommand{\br}[2]{\langle #1,#2\rangle}
\newcommand{\norm}[1]{\left\lVert #1\right\rVert}
\title{A Characterization of Quadrics Among Affine Hyperspheres by Section-Centroid Location}
\author{Alexandre Borentain}
\date{}
\begin{document}

\maketitle

\begin{abstract}
A theorem of Meyer and Reisner characterizes ellipsoids by the collinearity of centroids of parallel sections: if $\Omega\subset\R^{n+1}$ is a convex body such that for every $n$-dimensional subspace $M\subset\R^{n+1}$ the centroids of the sections $(x+M)\cap \Omega$ are collinear, then $\Omega$ is an ellipsoid. 

We study natural extensions of this centroid–collinearity condition to unbounded convex sets. In particular, we show that among affine hyperspheres, precisely the ellipsoids, paraboloids and one sheet of a two-sheeted hyperboloid satisfy this property. We also identify additional assumptions under which any convex hypersurface with this property must necessarily be a quadric.
\end{abstract}

\section{Introduction and main results}

Let $\Omega\subset\mathbb{R}^{n+1}$ be a convex body. A classical theorem of Blaschke asserts that if, for every family of parallel chords of $\Omega$, their midpoints lie in a common hyperplane, then $\Omega$ is an ellipsoid. Meyer and Reisner later study the following property: for every $n$–dimensional subspace $M\subset\mathbb{R}^{n+1}$ such that $(x+M)\cap\Omega$ is bounded, the centroids of the sections $(x+M)\cap \Omega$ are collinear (as $x$ ranges over all translates for which the section meets $\operatorname{int}\Omega$). We refer to this as the \emph{section--centroid collinearity} property (SCCP). Meyer and Reisner prove that a convex body with SCCP must be an ellipsoid \cite{Meyer}. 

\medskip

This paper investigates which unbounded convex sets satisfy the SCCP. In particular, we show that among affine hyperspheres, precisely the ellipsoids, paraboloids and one sheet of a two-sheeted hyperboloid satisfy the SCCP. Finally, we explore some constructions that aim to better understand the conditions under which surfaces with SCCP are affine hyperspheres. 

A surface is an affine hypersphere if its affine normal lines intersect in a single point. Affine hyperspheres were introduced by \c{T}i\c{t}eica \cite{Tzitzeica1907,Tzitzeica1908,Tzitzeica1909}
and were later studied by Blaschke \cite{Blaschke1923}, Calabi \cite{Calabi1972CompleteAffineHyperspheresI}, Cheng--Yau \cite{ChengYau1986CompleteAffineHyperspheresI}, and many others. A basic classification according to affine
mean curvature may be summarized as follows: affine hyperspheres with positive affine mean curvature
are ellipsoids; those with zero affine mean curvature are elliptic paraboloids; and the class with
negative affine mean curvature is much richer, since Cheng and Yau established that for any proper convex cone there exists a complete hyperbolic affine hypersphere
asymptotic to its boundary \cite{ChengYau1986CompleteAffineHyperspheresI,ChengYau1977MongeAmpereRegularity}. Identifying quadrics within this last class has been of interest; for
example, Pick and Berwald proved that vanishing Pick form implies that the hypersphere is locally an
open subset of a quadric; see \cite{NomizuSasaki1994}.

A recurring theme is the classical relationship between centroids of parallel sections and the \emph{affine normal vector}. Infinitesimally, the affine normal line at a point of a smooth strictly convex hypersurface is tangent to the locus of centroids of sections parallel to the tangent plane; thus, when the family of section centroids in a fixed normal direction is collinear, its direction agrees with the affine normal direction. This provides the bridge between centroid geometry and equi-affine surface theory that we exploit below.

Our main result is the following classification among affine hyperspheres.

\begin{theorem}\label{thm:main}
Let $\Omega\subset\R^{n+1}$ be a strictly convex domain with smooth boundary $\partial\Omega$, equipped with its Blaschke (equi-affine) normalization, so that $\partial\Omega$ is an affine hypersphere. Assume that for every $n$--dimensional linear subspace $M\subset\R^{n+1}$ such that the section $(x+M)\cap\Omega$ is bounded, the centroids of the sections $(x+M)\cap\Omega$ are collinear as $x$ varies over all translates for which the section meets $\operatorname{int}\Omega$. Then $\partial\Omega$ is one of the following quadrics:
\begin{itemize}
  \item an ellipsoid,
  \item a paraboloid, or
  \item one sheet of a two-sheeted hyperboloid.
\end{itemize}
\end{theorem}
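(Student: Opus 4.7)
The plan is to separate according to the sign of the affine mean curvature and to reduce the hyperbolic case to the vanishing of the Blaschke cubic (Pick) form of $\partial\Omega$. When the affine mean curvature is nonnegative, the conclusion requires no input beyond the classical classification recalled in the introduction: a positive-affine-mean-curvature affine hypersphere is automatically an ellipsoid, and a zero-affine-mean-curvature one is automatically an elliptic paraboloid. The SCCP hypothesis plays no role in these two cases, so the substance of the theorem is the hyperbolic case.

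For the hyperbolic case, I would work at an arbitrary $p\in\partial\Omega$ in Blaschke-adapted affine coordinates, writing $\partial\Omega$ locally as the graph
\[
x_{n+1}=f(y)=\tfrac{1}{2}|y|^{2}+\tfrac{1}{6}C_{ijk}y^{i}y^{j}y^{k}+\tfrac{1}{24}D_{ijkl}y^{i}y^{j}y^{k}y^{l}+O(|y|^{5}),
\]
with Blaschke metric $h_{ij}=\delta_{ij}$ at $p$, affine normal along the $x_{n+1}$-axis, and $C_{ijk}$ the Blaschke cubic form (traceless by apolarity). Taking $M=T_p\partial\Omega$ and the family of caps $K_t:=\{y:f(y)\le t\}$, the infinitesimal tangency of the affine normal to the centroid locus combined with the SCCP forces the tangential centroid $\bar y(t):=\mathrm{centroid}(K_t)\in\R^{n}$ to vanish identically in $t$. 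A polar expansion of $f(y)=t$ yields $r(\omega)=\sqrt{2t}-\tfrac{1}{3}C_\omega t+O(t^{3/2})$, where $C_\omega=C_{ijk}\omega^{i}\omega^{j}\omega^{k}$; integrating $\omega_l r(\omega)^{n+1}$ over $S^{n-1}$, the first potentially nontrivial contribution (of order $t^{1/2}$) is a multiple of the trace $h^{ij}C_{ijl}$ and vanishes by apolarity, recovering the classical infinitesimal statement. Several subsequent orders are killed by parity of spherical integrals, and at the first truly nontrivial order, the affine-hypersphere structure equations (constancy of affine mean curvature together with Codazzi for $C$) let one express $D$ and the quintic Taylor coefficient of $f$ in terms of $C$ and its Blaschke-covariant derivatives. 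The condition $\bar y(t)\equiv0$ then reduces to a tangent-vector-valued polynomial identity in $C$ alone whose vanishing forces $C_p=0$; since $p$ was arbitrary, $C\equiv0$ on $\partial\Omega$.

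Once the Pick form is identically zero, the theorem of Pick and Berwald quoted in the introduction identifies $\partial\Omega$ locally with an open piece of a quadric, and connectedness together with the hyperbolic character then singles it out globally as one sheet of a two-sheeted hyperboloid. The main obstacle I anticipate is the identification of the first nontrivial coefficient of $\bar y(t)$: parity and apolarity discard many candidate terms, but several tensorial combinations of $C$, $D$ and higher Taylor data survive, and it is essential to apply the hypersphere structure equations to rewrite $D$ and higher coefficients purely in terms of $C$ and $\nabla^{h}C$ before the surviving obstruction can be exhibited as a nonzero multiple of an intrinsic invariant of $C$ — equivalently, before one can conclude $C=0$ from $\bar y(t)\equiv0$. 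Everything else, including the parabolic and elliptic cases and the final global identification of the hyperboloid sheet, then follows without additional difficulty.
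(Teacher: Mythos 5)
Your reduction of the hyperbolic case to the vanishing of the Pick form has a genuine gap at precisely the step you flag as "the main obstacle": you have not shown, and it is not clear that one can show, that $\bar y(t)\equiv 0$ forces $C_p=0$. Concretely, after the order-$t^{1/2}$ coefficient dies by apolarity and the next order dies by parity, the first surviving obstruction is a single tangent-vector-valued equation (one $n$-vector per point) built from contractions of $C\otimes C\otimes C$, $C\otimes D$ and the quintic coefficient $E$; once the hypersphere structure equations are used to eliminate $D$ and $E$, this equation involves not "$C$ alone" but also $\nabla^{h}C$ and $\nabla^{h}\nabla^{h}C$ (e.g.\ divergence- and Laplacian-type contractions of $C$). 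A single vector identity per point cannot algebraically annihilate a traceless cubic form with $\binom{n+2}{3}-n$ components, so the conclusion $C\equiv 0$ would have to come either from exploiting infinitely many orders of the expansion simultaneously or from a global integral/maximum-principle argument in the spirit of Blaschke--Deicke or Calabi --- and the latter is exactly what is unavailable for noncompact hyperbolic affine hyperspheres, which generically have $C\neq 0$ by Cheng--Yau. Nothing in your sketch supplies either route, so the hyperbolic case (which you correctly identify as the entire content of the theorem) remains unproved.

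For comparison, the paper avoids this local jet analysis entirely. It uses the fact that the affine normals of an affine hypersphere are either all parallel or all concurrent to produce a strictly convex set $B$ that is a translate or a homothet of $\Omega$ containing every section centroid taken at a tangent hyperplane; the adapted Meyer--Reisner floating-body lemma (Lemma~\ref{lem:7-adapted}) then shows that the volume cut off by each translated or dilated tangent hyperplane is constant, and the characterization theorems of Kim identify constant-cut-volume hypersurfaces as elliptic paraboloids (translates) or hyperboloid sheets (dilates). The global "constant cut volume" formulation is what replaces the pointwise tensorial computation you would need, and it is the step your proposal is missing a substitute for. Your treatment of the elliptic and parabolic cases via the classification of complete affine hyperspheres is consistent with the paper's own remark that only the hyperbolic case carries real content, though even there one should note that the classification requires a completeness hypothesis that has to be checked for $\partial\Omega$.
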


This result follows from adapting the techniques used by Meyer–Reisner \cite{Meyer} to the unbounded setting and using characterizations of quadric surfaces due to Kim \cite{kimHyp,kimPara}. We prove a small extension of this theorem in which we replace the affine hypersphere condition by
the weaker assumption of asymptotic convergence to a cone:
\begin{theorem}\label{thm:thm2}
Let $\Omega\subset\mathbb{R}^{n+1}$ be a strictly convex domain with smooth boundary $\partial\Omega$.
Assume that the recession cone of $\Omega$ is either
\begin{enumerate}[label=(\roman*),ref=(\roman*)] 
  \item $\{0\}$,
  \item $1$--dimensional, or
  \item $(n+1)$--dimensional; and in this case, assume in addition that $\partial\Omega$ is asymptotic to a cone.
\end{enumerate}
Suppose moreover that $\Omega$ satisfies the following:

\smallskip
\noindent
For every $n$--dimensional linear subspace $M\subset\mathbb{R}^{n+1}$ such that $(x+M)\cap\Omega$ is bounded, the centroids of the sections $(x+M)\cap\Omega$ are collinear as $x$ varies over all translates for which $(x+M)\cap\operatorname{int}\Omega\neq\varnothing$.
\smallskip

\noindent
Then $\partial\Omega$ is a quadric, and more precisely:
\begin{itemize}
  \item in case \emph{(i)} $\partial\Omega$ is an ellipsoid;
  \item in case \emph{(ii)} $\partial\Omega$ is a paraboloid;
  \item in case \emph{(iii)} $\partial\Omega$ is one sheet of a two-sheeted hyperboloid.
\end{itemize}
\end{theorem}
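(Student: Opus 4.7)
The plan is to reduce each case to Theorem~\ref{thm:main} by establishing that the recession-cone hypothesis together with the SCCP forces $\partial\Omega$ to be an affine hypersphere of the stated type; Theorem~\ref{thm:main} then finishes the classification. The bridge is the principle recalled in the introduction: at any point $y\in\partial\Omega$, the affine normal line is tangent at $y$ to the locus of centroids of sections parallel to the tangent hyperplane, so when the SCCP collapses this locus to a straight line, that line is precisely the affine normal line at $y$. A preliminary observation, used in both unbounded cases, is that strict convexity of $\partial\Omega$ prevents any nonzero recession direction from lying in a tangent hyperplane; consequently, for every $y\in\partial\Omega$ the linear subspace $M=T_y\partial\Omega$ satisfies $M\cap K=\{0\}$ (where $K$ denotes the recession cone), and is therefore admissible for the SCCP.

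Case~(i) is immediate: $\Omega$ is a convex body, so the SCCP is the Meyer--Reisner hypothesis and $\partial\Omega$ is an ellipsoid.

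For case~(ii), let $v$ span the $1$-dimensional recession cone. Fix an admissible $M$ and translate the section $(x+M)\cap\Omega$ in the direction $v$; its centroid $c(x)\in\Omega$ must then diverge. Since $K=\R_{\ge 0}v$, any unbounded sequence in $\Omega$ accumulates only in the direction $v$, and as $c(x)$ lies on the straight line $\ell_M$ furnished by the SCCP, the direction of $\ell_M$ must be $v$. Applied to $M=T_y\partial\Omega$ at every $y\in\partial\Omega$, this shows the affine normal at every boundary point is parallel to $v$; hence $\partial\Omega$ is an affine hypersphere with affine center at infinity, and Theorem~\ref{thm:main} forces it to be a paraboloid.

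For case~(iii), let $C$ be the cone to which $\partial\Omega$ is asymptotic, with apex $p$, and translate so that $p=0$. Fix an admissible $M$ with unit normal $n$ and parametrize the parallel family by $H_t=tn+M$. Homogeneity of $C$ gives $C\cap H_t=t\,(C\cap H_1)$, so the cone-section centroid is $t\,c_1$, tracing the ray $\R_{\ge 0}c_1$ from $p=0$. The asymptotic hypothesis yields Hausdorff convergence $\tfrac{1}{t}(\Omega\cap H_t)\to C\cap H_1$ as $t\to\infty$, hence $c_\Omega(t)=t\,c_1+o(t)$. Since $\ell_M$ is a straight line whose far points approximate the ray $\R_{\ge 0}c_1$, its direction must equal $c_1$ and the asymptotic displacement must vanish, forcing $\ell_M$ to contain the origin $p$. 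Applying this at every $y\in\partial\Omega$ with $M=T_y\partial\Omega$ shows that every affine normal passes through $p$, so $\partial\Omega$ is an affine hypersphere centered at $p$; Theorem~\ref{thm:main} then identifies it as one sheet of a two-sheeted hyperboloid.

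The main technical obstacle is the asymptotic centroid comparison in case~(iii): one must extract Hausdorff convergence of the rescaled sections $\tfrac{1}{t}(\Omega\cap H_t)$ to $C\cap H_1$ from the qualitative hypothesis that $\partial\Omega$ is asymptotic to $\partial C$, and verify that this suffices for convergence of the associated centroids (continuity of the centroid functional under Hausdorff limits of convex bodies should close this gap). Once this is established, the incidence of $\ell_M$ through $p$ is an elementary consequence of the fact that a straight line whose tail approaches a ray at vanishing distance must contain the ray's vertex.
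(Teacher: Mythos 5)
Your overall strategy---show that SCCP plus the recession-cone hypothesis forces all centroid lines (hence all affine normals) to be parallel or concurrent, then invoke Theorem~\ref{thm:main}---is exactly the paper's, and cases (i) and (ii) go through: your divergence argument for case (ii) is essentially the paper's Lemma~\ref{lem:homothety} specialized to a ray.

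There is, however, a genuine gap in case (iii). From the rescaled convergence $\tfrac{1}{t}(\Omega\cap H_t)\to C\cap H_1$ you correctly deduce $c_\Omega(t)=t\,c_1+o(t)$, and hence that the direction of $\ell_M$ is $c_1$. But this does \emph{not} force ``the asymptotic displacement to vanish'': a line $q+t\,c_1$ with $q\ne 0$ also satisfies $q+t\,c_1=t\,c_1+o(t)$, and its far points stay at bounded distance from the ray $\R_{\ge 0}c_1$---which is $o(t)$ but not $o(1)$. So your estimate pins down only the direction of each affine normal, not its incidence with $p$; and having each normal merely parallel to the corresponding ray of a full-dimensional cone does not make the normals concurrent, so $\partial\Omega$ is not shown to be an affine hypersphere and Theorem~\ref{thm:main} cannot be applied. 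Note also that the rescaled Hausdorff convergence you flag as the main technical obstacle holds for \emph{any} closed convex set (it is just blow-down convergence to the recession cone) and does not use the asymptotic hypothesis at all. What the asymptotic hypothesis of Definition~\ref{def:conv} actually buys---and what the paper's proof uses---is the \emph{unrescaled} convergence $\Sigma(u,t;\Omega)\to\Sigma(u,t;C)$ in Hausdorff distance as $t\to\infty$ (absolute, not relative, error tending to zero), whence $c_\Omega(t)-t\,c_1\to 0$; only this $o(1)$ comparison forces $q=0$ and hence concurrency of all centroid lines at the apex. You need to replace your $o(t)$ comparison by this $o(1)$ comparison to close case (iii).
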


Theorem~\ref{thm:main} should be compared with the Meyer--Reisner theorem\cite{Meyer}: in their setting the convex set is bounded, and so $(x+M)\cap\Omega$ is always bounded. Conveniently, for convex unbounded sets, if $(y+M)\cap\Omega$ is bounded and nonempty, then $(x+M)\cap\Omega$ is also bounded for all $x\in \R^{n+1}$.

It is natural to ask whether the affine hypersphere assumption or asymptotic convergence to a cone can be removed.

\begin{question}\label{ques:question}
Let $\Omega\subset\R^{n+1}$ be a strictly convex domain with smooth boundary $\partial\Omega$, and suppose that for every $n$--dimensional subspace $M\subset\R^{n+1}$ such that $(x+M)\cap\Omega$ is bounded, the centroids of the sections $(x+M)\cap\Omega$ are collinear as $x$ varies over all translates with $(x+M)\cap\operatorname{int}\Omega\neq\varnothing$. Must $\partial\Omega$ be an ellipsoid, paraboloid, or one sheet of a two-sheeted hyperboloid?
\end{question}

\paragraph{Organization.}
In the rest of this section, we fix notation and recall basic material on recession cones, Gauss maps, and lines of centroids. In Section~\ref{sec:main} we adapt several lemmas of Meyer and Reisner to the unbounded case and derive a volume--cut functional whose level sets determine our initial set under some homothety. We connect this to work in \cite{kimHyp} and \cite{kimPara} to prove Theorem~\ref{thm:main}. In Section~\ref{sec:extension} we investigate the behavior of centroid lines using recession cones, and prove Theorem~\ref{thm:thm2}. Finally, in the last section we discuss the remaining open cases needed for a complete classification of convex hypersurfaces with SCCP.

\subsection{Notation and basic objects}

Throughout, $\langle\cdot,\cdot\rangle$ denotes the standard inner product on $\R^{n+1}$, $\|\cdot\|$ the Euclidean norm, and $\mathcal{H}^k$ the $k$--dimensional Hausdorff measure. The unit sphere in $\R^{n+1}$ is denoted $S^{n}$. The interior and boundary of a set $A$ are denoted $\operatorname{int}A$ and $\partial A$, respectively.

\begin{definition}[Hyperplanes, sections, and centroids]
For $u\in S^{n}$ and $t\in\R$, define the hyperplane
\[
\Pi(u,t):=\{x\in\R^{n+1}:\ \langle u,x\rangle=t\}.
\]
For any $X\subset\R^{n+1}$ define the section
\[
\Sigma(u,t;X):=\Pi(u,t)\cap X.
\]
When $0<\mathcal{H}^{n}(\Sigma(u,t;X))<\infty$, the centroid of the section is
\[
\mathrm{cen}(u,t;X)
:=\frac{1}{\mathcal{H}^{n}(\Sigma(u,t;X))}
  \int_{\Sigma(u,t;X)} x\,\mathrm d\mathcal{H}^{n}(x)\in\Pi(u,t).
\]
\end{definition}

\begin{definition}[Minkowski functional and support function]
For a convex body $K\subset\R^{n+1}$ with $0\in\operatorname{int}K$, the Minkowski functional is
\[
\|x\|_K:=\inf\{\lambda>0:\ x\in \lambda K\},
\]
and the support function is
\[
h_K(u):=\sup_{y\in K}\langle u,y\rangle\quad(u\in\R^{n+1}).
\]
\end{definition}

\begin{definition}[Recession cone]
For a convex (not necessarily bounded) set $\Omega\subset\R^{n+1}$, the recession cone is
\[
\operatorname{rec}(\Omega):=\{d\in\R^{n+1}:\ x+td\in\Omega\ \ \forall x\in\Omega,\ t\ge 0\}.
\]
It is a closed convex cone, and we denote its linear dimension by $\dim\operatorname{rec}(\Omega)$.
\end{definition}

\subsubsection{Convergence to cones}

We now introduce two different notions of convergence of an unbounded convex hypersurface to a cone which will be useful to us. Let $X\subset\R^{n+1}$ be a closed, strictly convex set with smooth boundary $\partial X$ an $n$--dimensional hypersurface, and let $C\subset\R^{n+1}$ be the boundary of a closed convex cone with apex at the origin, i.e., $\lambda C=C$ for all $\lambda>0$.

For $R>0$, write $S_R:=\{x\in\R^{n+1}:\|x\|=R\}$.

\begin{definition}[Blow-down convergence]\label{def:blowdown}
Let $X\subset\R^{n+1}$ be a nonempty closed convex set with nonempty interior, and fix some $x_0\in X$.
The \emph{blow-down cone} of $X$ is the Painlev\'e--Kuratowski limit
\[
X_\infty \;:=\; \lim_{t\to\infty} t^{-1}(X - x_0),
\]
whenever this limit exists.
\end{definition}

It is well known that for closed convex sets this limit always exists, is independent of the choice of $x_0$, and coincides with the classical recession cone $\operatorname{rec}(X)$, see for instance \cite[Chap.~2]{AuslenderTeboulle2003} or \cite{GarciaGoicocheaLopezMartinez2025}.

\begin{definition}[Asymptotic convergence]\label{def:conv}
We say that \emph{$\partial X$ is asymptotic to $C$} if
\[
d\big(\partial X\cap S_R,\; C\cap S_R\big)\xrightarrow[R\to\infty]{}0.
\]
\end{definition}

This definition is the one used in work on the Calabi conjecture \cite{Calabi1972} and \cite{LiSimonZhao1993} and is not to be confused with the asymptotic cone which is equivalent to the recession cone.

\medskip

If $\partial X$ is asymptotically convergent to a cone $\partial\mathcal{C}$ then this cone must be $\partial\operatorname{rec}(X)$. So the asymptotic and blow-down limits are equal when the former exists.

It is possible, however, for a convex set to have a well-defined blow-down cone (equivalently, a recession cone) while its boundary fails to be asymptotic to any cone in the sense of Definition~\ref{def:conv}. 
For example, the epigraph of an elliptic paraboloid in $\R^{n+1}$ has recession cone equal to a single ray. 

More generally, blow-down convergence alone does not force asymptotic convergence of $\partial X\cap S_R$ to $\operatorname{rec}(X)\cap S_R$. 
Conversely, even when $\operatorname{rec}(X)$ is full-dimensional, asymptotic convergence may still fail: see Figure~\ref{fig:asymptotic-parabola}, where the epigraph of $f(x)=e^x$ has recession cone equal to the second quadrant, but the graph of $f$ is not asymptotic to the boundary of that cone.

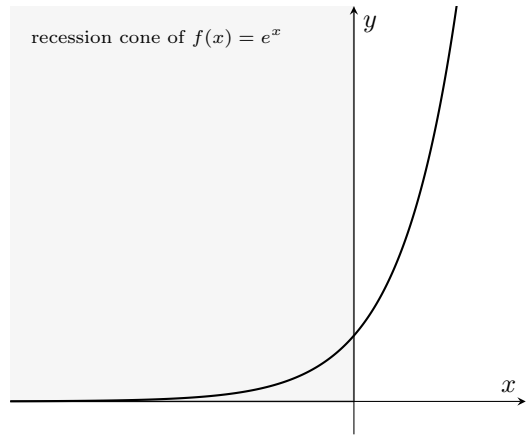
\begin{figure}[ht]
    \centering
    \begin{tikzpicture}
        \begin{axis}[
            axis lines = middle,
            xlabel = {$x$},
            ylabel = {$y$},
            xmin = -6, xmax = 3,
            ymin = -0.5, ymax = 6,
            xtick=\empty, ytick=\empty, 
            axis on top=true,
        ]

            \path[fill=gray!20, opacity=0.35]
                (axis cs:-6,0) rectangle (axis cs:0,6);

            \addplot[gray!60, thick] coordinates {(-6,0) (0,0)};
            \addplot[gray!60, thick] coordinates {(0,0) (0,6)};

            \addplot[
                domain = -6:3,
                samples = 200,
                smooth,
                thick
            ] {exp(x)};

            \node[anchor=north west, font=\scriptsize, align=left]
              at (axis cs:-5.8,5.8)
              {recession cone of $f(x)=e^x$};

        \end{axis}
    \end{tikzpicture}
    \caption{The function $f(x)=e^x$.}
    \label{fig:asymptotic-parabola}
\end{figure}

\subsubsection{Gauss map and centroid loci}

Let $\Omega\subset\R^{n+1}$ have $C^2$ strictly convex boundary $\partial\Omega$. The (outer) Euclidean Gauss map
\[
N:\partial\Omega\to S^{n},\qquad N(x)=\text{outer unit normal at }x,
\]
is well defined and a homeomorphism onto its image. The image $U:=N(\partial\Omega)\subset S^{n}$ will be the set of admissible normals.

\begin{definition}[Centroid curve and centroid line]
Fix $u\in U$. Define
\[
I(u):=\left\{t\in\R:\ 0<\mathcal{H}^{n}(\Sigma(u,t;\Omega))<\infty\right\},
\]
and the \emph{centroid curve}
\[
\gamma_u\ :=\ \{\mathrm{cen}(u,t;\Omega):\ t\in I(u)\}\ \subset \R^{n+1}.
\]
When $\gamma_u$ is contained in some affine line $\ell_u$, we call $\ell_u$ the \emph{centroid line in the direction $u$}. When the underlying set is important we write $\ell_u^\Omega$.
\end{definition}

\begin{definition}[Cut--volume functional]
For $a\in\R^{n+1}\setminus\{0\}$, write
\begin{align*}
    &H(a):=\{x\in\R^{n+1}:\ \langle a,x\rangle=1\},\\
    &C(a)^{+}:=\{x\in\R^{n+1}:\ \langle a,x\rangle\ge 1\},\\
    &C(a)^{-}:=\{x\in\R^{n+1}:\ \langle a,x\rangle\le 1\},
\end{align*}
and, for a measurable $\Omega\subset\R^{n+1}$,
\[
V(a):=\mathcal{H}^{n+1}\bigl(\Omega\cap C(a)^{-}\bigr).
\]
And denote by $x(a)$ the centroid of the section $H(a)\cap \Omega$.

\end{definition}

Note that 
\[
H(a)\cap\Omega=\Sigma\ \bigg(\frac{a}{\norm{a}},\frac{1}{\norm{a}}\bigg),\quad \text{and} \quad x(a)=\gamma\ \bigg(\frac{a}{\norm{a}},\frac{1}{\norm{a}}\bigg).
\]

\section{Core lemmas and classification results}\label{sec:main}

In this section we adapt two lemmas from \cite{Meyer} to unbounded convex sets in $\R^{n+1}$ and derive a key consequence: under SCCP, certain cut volumes determined by support hyperplanes are constant. This will be the main tool in the classification of affine hyperspheres with SCCP. The adapted proofs are nearly identical to the originals but we include them for completeness.

\subsection{Differentiability of the cut-volume functional}

The following lemma is a variant of Lemma~5 in \cite{Meyer}, adapted to cuts by half-spaces of the form $C(a)^-$.

\begin{lemma}\label{gradV}
Let $\Omega\subset\R^{n+1}$ be a unbounded convex set with $0\notin \Omega$ and smooth boundary. Then $V$ is $C^1$ at every $a$ for which $0<V(a)<\infty$, and
\[
x(a)=\frac{\nabla V(a)}{\br{a}{\nabla V(a)}}.
\]
\end{lemma}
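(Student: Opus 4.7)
My plan is to compute the first variation of $V$ at $a$ and recognize the answer as a scalar multiple of the centroid of $H(a)\cap\Omega$. Before differentiating, I would first observe that the condition $V(a)<\infty$ is equivalent, for a full-dimensional convex $\Omega$, to $\operatorname{rec}(\Omega)\cap\{\langle a,\cdot\rangle\le 0\}=\{0\}$, since otherwise $\Omega\cap C(a)^-$ contains a ray. This is a strict, hence open, condition in $a$, so there is a neighborhood of $a$ on which $V$ is finite and all sections $\overline{\Omega\cap C(a')^-}$ stay uniformly bounded. This reduces the entire argument to a compact neighborhood, after which the unboundedness of $\Omega$ plays no further role and the calculation becomes a direct variant of the bounded case in \cite{Meyer}.

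Next I would compute $\partial_v V(a)$ for $v\in\R^{n+1}$. Only the planar face of $\partial(\Omega\cap C(a)^-)$ moves when $a$ is perturbed; the piece on $\partial\Omega$ is fixed. Parametrizing a tubular neighborhood of $H(a)$ by $(y,s)\mapsto y+s\,a/\|a\|$ with $y\in H(a)$, a first-order expansion of $\langle a+\epsilon v,\,y+s\,a/\|a\|\rangle=1$ gives $s=-\epsilon\,\langle v,y\rangle/\|a\|+O(\epsilon^2)$. Hence $(C(a+\epsilon v)^-\triangle C(a)^-)\cap\Omega$ is a signed slab over $H(a)\cap\Omega$ of signed width $-\epsilon\,\langle v,y\rangle/\|a\|$, and Fubini together with dominated convergence yields
\[
\partial_v V(a)=-\frac{1}{\|a\|}\int_{H(a)\cap\Omega}\langle v,y\rangle\,d\mathcal{H}^n(y).
\]
Consequently $\nabla V(a)=-\dfrac{\mathcal{H}^n(H(a)\cap\Omega)}{\|a\|}\,x(a)$. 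Since $x(a)\in H(a)$, we have $\langle a,x(a)\rangle=1$, so $\langle a,\nabla V(a)\rangle=-\mathcal{H}^n(H(a)\cap\Omega)/\|a\|$; the two minus signs cancel after dividing, giving the stated formula.

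For the $C^1$ claim, I would argue that both $a\mapsto\mathcal{H}^n(H(a)\cap\Omega)$ and $a\mapsto\int_{H(a)\cap\Omega}y\,d\mathcal{H}^n(y)$ are continuous. Smoothness and strict convexity of $\partial\Omega$ make $H(a)\cap\partial\Omega$ a transverse intersection, so the section $H(a)\cap\Omega$ varies continuously in Hausdorff distance with uniformly bounded $n$-measure, which is enough for continuity of both integrals. The main obstacle I anticipate is making the first-order expansion rigorous near this transverse boundary: the slab coordinates degenerate on $H(a)\cap\partial\Omega$, and one must control the contribution from a small collar around that set using transversality, absorbing it into the $O(\epsilon^2)$ remainder before invoking dominated convergence. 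This is the one place where the smoothness hypothesis is genuinely used and is treated identically to the bounded setting of Meyer and Reisner.
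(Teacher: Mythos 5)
Your proposal is correct and follows essentially the same route as the paper: finiteness of $V$ near $a$ via the open condition $\langle a,v\rangle>0$ on $\operatorname{rec}(\Omega)\cap S^n$ (hence uniformly bounded cuts), a first variation computed as a signed slab over $H(a)\cap\Omega$ via Fubini, and division using $\langle a,y\rangle=1$ on $H(a)$; your handling of the degenerate collar near $H(a)\cap\partial\Omega$ corresponds to the paper's squeeze between the inner and outer sets $U(\varepsilon)$ and $W(\varepsilon)$. As a side remark, your sign $\nabla V(a)=-\|a\|^{-1}\int_{H(a)\cap\Omega}y\,d\mathcal{H}^n(y)$ is the correct one for $V(a)=\mathcal{H}^{n+1}(\Omega\cap C(a)^-)$ (the paper's intermediate formula \eqref{eq:gradV-moment} omits the minus sign, which in any case cancels in the final quotient).
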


\begin{proof}
Since $\Omega\cap C(a)^-$ is convex with nonempty interior and
$0<V(a)<\infty$, it must be bounded. In particular, $H(a)\cap \Omega$
is bounded and has finite $n$--dimensional measure.

Let $\operatorname{rec}(\Omega)$ denote the recession cone of $\Omega$. If
$v\in\operatorname{rec}(\Omega)\setminus\{0\}$ and $\ip{a}{v}\le 0$, then for any
$x_0\in \Omega\cap C(a)^-$ we would have $x_0+tv\in \Omega\cap C(a)^-$ for all $t\ge0$,
so $\Omega\cap C(a)^-$ would be unbounded, a contradiction. Hence
\begin{equation}\label{eq:rec-positive}
\ip{a}{v}>0\qquad\forall\,v\in\operatorname{rec}(\Omega)\setminus\{0\}.
\end{equation}
By compactness of $\operatorname{rec}(\Omega)\cap S^{n}$ there exists $\alpha>0$
such that
\begin{equation}\label{eq:rec-alpha}
\ip{a}{v}\ge\alpha>0\qquad\forall\,v\in\operatorname{rec}(\Omega)\cap S^{n}.
\end{equation}

It follows that there exist $R>0$ and $\varepsilon_0>0$ such that for all
$\varepsilon$ with $|\varepsilon|\le\varepsilon_0$ and every coordinate
direction $e_j$,
\begin{equation}\label{eq:uniform-ball}
\Omega\cap C(a+\varepsilon e_j)^-\subset B_R(0).
\end{equation}
Indeed, if \eqref{eq:uniform-ball} failed, there would be a sequence
$x_k\in \Omega\cap C(a+\varepsilon_k e_j)^-$ with $\|x_k\|\to\infty$ and
$\varepsilon_k\to0$, and after normalizing we would obtain a direction
$d\in\operatorname{rec}(\Omega)\cap S^{n}$ with $\ip{a}{d}\le0$, contradicting
\eqref{eq:rec-positive}. Thus all cuts $\Omega\cap C(a+\varepsilon e_j)^-$ for
small $\varepsilon$ lie in a fixed ball $B_R(0)$.

\medskip

\emph{Step 1: One-sided derivative in a coordinate direction.}
Consider
\[
D_j(\varepsilon):=V(a+\varepsilon e_j)-V(a),\qquad\varepsilon\in\R.
\]
We compute the derivative at $\varepsilon=0$ for $j=1$; the other coordinates
are analogous, and the argument for $\varepsilon<0$ is the same as for
$\varepsilon>0$.

Write
\[
\Omega_+(\varepsilon):=\{x\in \Omega:\ \ip{a}{x}>1,\ \ip{a+\varepsilon e_1}{x}\le1\},
\]
\[
\Omega_-(\varepsilon):=\{x\in \Omega:\ \ip{a}{x}\le1,\ \ip{a+\varepsilon e_1}{x}>1\},
\]
so that
\[
D_1(\varepsilon)=|\Omega_+(\varepsilon)|-|\Omega_-(\varepsilon)|.
\]
Both $\Omega_+(\varepsilon)$ and $\Omega_-(\varepsilon)$ lie in the slab between the
hyperplanes $H(a)$ and $H(a+\varepsilon e_1)$, hence in the bounded set
$\Omega\cap\bigl(C(a)^-\cup C(a+\varepsilon e_1)^-\bigr)\subset B_R(0)$ for
$|\varepsilon|\le\varepsilon_0$ by \eqref{eq:uniform-ball}.

Let $P$ denote orthogonal projection onto $H(a)$, and
$Q_\varepsilon x$ the projection of $x$ onto
$H(a+\varepsilon e_1)$, parallel to the vector $a$. Then, we have for $x\in H(a)$:
\begin{equation}\label{eq:segment-length}
\|x-Q_\varepsilon x\|
=\frac{|\varepsilon||x_1|}{\|a\|+\varepsilon\frac{a_1}{\|a\|}}\,
\end{equation}

Define
\[
U(\varepsilon):=H(a)\cap \Omega\cap P\bigl(H(a+\varepsilon e_1)\cap \Omega\bigr),\qquad
W(\varepsilon):=\{x\in H(a):\ [x,Q_\varepsilon x]\cap \Omega\neq\emptyset\}.
\]
Each point of $\Omega_+(\varepsilon)$ lies on a segment $[x,Q_\varepsilon x]$
with $x\in U(\varepsilon)$, and conversely each such segment contributes a
sliver of $\Omega_+(\varepsilon)$ between the two hyperplanes. By Fubini's theorem

\begin{equation}\label{eq:Omegaplus-bounds}
\frac{\varepsilon}{\|a\|+\varepsilon\frac{a_1}{\|a\|}}
   \int_{U(\varepsilon)\cap\{x_1\ge0\}} x_1\,d\sigma(x)
\;\le\;
|\Omega_+(\varepsilon)|
\;\le\;
\frac{\varepsilon}{\|a\|+\varepsilon\frac{a_1}{\|a\|}}
   \int_{W(\varepsilon)\cap\{x_1\ge0\}} x_1\,d\sigma(x),
\end{equation}
An analogous estimate holds for $\Omega_-(\varepsilon)$ with $x_1\le0$.

Because $\Omega\cap C(a)^-$ is compact, the sets $H(a)\cap \Omega$ and
$H(a+\varepsilon e_1)\cap \Omega$ converge in the Hausdorff metric as
$\varepsilon\to0$, and hence
\[
U(\varepsilon),W(\varepsilon)\xrightarrow[\varepsilon\to0]{d_H} H(a)\cap \Omega.
\]
Since $H(a)\cap \Omega$ is compact, the function $x\mapsto x_1$ is bounded on
all these sets. Letting $\varepsilon\downarrow0$ in
\eqref{eq:Omegaplus-bounds}, we obtain
\[
\lim_{\varepsilon\downarrow0}\frac{|\Omega_+(\varepsilon)|}{\varepsilon}
=\frac{1}{\|a\|}\int_{H(a)\cap \Omega,\ x_1\ge0} x_1\,d\sigma(x).
\]
A completely similar argument for $\Omega_-(\varepsilon)$ (considering $x_1\le0$)
gives
\[
\lim_{\varepsilon\downarrow0}\frac{|\Omega_-(\varepsilon)|}{\varepsilon}
=\frac{1}{\|a\|}\int_{H(a)\cap \Omega,\ x_1\le0} x_1\,d\sigma(x).
\]
Subtracting the two limits, we obtain
\[
\frac{\partial V}{\partial a_1}(a)
=\lim_{\varepsilon\to0}\frac{D_1(\varepsilon)}{\varepsilon}
=\frac{1}{\|a\|}\int_{H(a)\cap \Omega} x_1\,d\sigma(x).
\]

Repeating the same computation for all coordinate directions $e_j$ shows that
\begin{equation}\label{eq:gradV-moment}
    \nabla V(a)
=\frac{1}{\|a\|}\int_{H(a)\cap \Omega} x\,d\sigma(x).
\end{equation}
Thus, $V$ is $C^1$ in a neighborhood of $a$.

\medskip

\emph{Step 2: Centroid formula.}

Taking the inner product of \eqref{eq:gradV-moment} with $a$ and using
$\ip{a}{x}=1$ on $H(a)$, we obtain
\[
\ip{a}{\nabla V(a)}
=\frac{1}{\|a\|}\int_{H(a)\cap \Omega} \ip{a}{x}\,d\sigma(x)
=\frac{1}{\|a\|}\int_{H(a)\cap \Omega} 1\,d\sigma(x).
\]
Hence
\[
\int_{H(a)\cap \Omega} x\,d\sigma(x)
=\|a\|\,\nabla V(a),\qquad
\int_{H(a)\cap \Omega} 1\,d\sigma(x)
=\|a\|\,\ip{a}{\nabla V(a)}.
\]
Dividing these two identities gives
\[
x(a)
=\frac{\nabla V(a)}{\ip{a}{\nabla V(a)}}.
\]
\end{proof}

\subsection{Floating body characterization}

We next adapt Lemma~7 of \cite{Meyer}.

\begin{lemma}\label{lem:7-adapted}
Let $\Omega\subset\R^{n+1}$ and $B\subset \Omega$ both be unbounded strictly convex sets with $0\notin\operatorname{int}B$. Assume that for every support hyperplane $H$ of $B$, the centroid of $H\cap \Omega$ exists and belongs to $B$. Suppose further that for every $a\in\R^{n+1}\setminus\{0\}$ such that $H(a)$ is a support hyperplane of $B$ and $B\subset C(a)^+$, we have
\[
0<V(a)<\infty,\qquad V(a):=\mathcal{H}^{n+1}(\Omega\cap C(a)^+).
\]
Then there exists a constant $c>0$ such that
\[
V(a)=c\qquad\text{for all such }a.
\]
\end{lemma}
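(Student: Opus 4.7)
The plan is to show that $V$ has vanishing derivative along the one-parameter family of admissible support covectors of $B$, hence is constant on that family.

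First, I would adapt Lemma~\ref{gradV} to the outer cut $V(a)=\mathcal{H}^{n+1}(\Omega\cap C(a)^+)$. The slab argument of Lemma~\ref{gradV} transfers after interchanging the two half-spaces: the analogue of \eqref{eq:rec-positive} is $\langle a,v\rangle<0$ for all $v\in\operatorname{rec}(\Omega)\setminus\{0\}$, forced by $V(a)<\infty$, which again yields uniform boundedness of $\Omega\cap C(a')^+$ for $a'$ close to $a$ as in \eqref{eq:uniform-ball}. The signs of the slab contributions are reversed, but dividing by $\langle a,\nabla V(a)\rangle$ cancels both sign changes, so the centroid identity
\[
x(a)=\frac{\nabla V(a)}{\langle a,\nabla V(a)\rangle}
\]
holds unchanged, and $\nabla V(a)$ is a nonzero scalar multiple of the centroid $x(a)$.

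Next, I would parameterize the admissible covectors by the unit outer normals to $\partial B$. Let $\mathcal{U}\subset S^n$ denote the set of $u$ for which $h_B(u)<\infty$ and the supremum defining $h_B(u)$ is attained. Because $B$ is unbounded and strictly convex, $\mathcal{U}$ is an open, connected subset of $S^n$, and $h_B$ is differentiable at every $u\in\mathcal{U}$ with $\nabla h_B(u)=x_B(u)$, the unique contact point of the corresponding support hyperplane with $\partial B$. Setting $a(u):=u/h_B(u)$ gives a bijection between $\mathcal{U}$ and the admissible covectors in the lemma's statement, with $B\subset C(a(u))^+$. The hypothesis $\mathrm{cen}(H(a(u))\cap\Omega)\in B$, combined with $H(a(u))\cap B=\{x_B(u)\}$, then forces $x(a(u))=x_B(u)$.

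Finally, I would differentiate $\Phi(u):=V(a(u))$ via the chain rule. A direct computation gives
\[
Da(u)[v]=\frac{v}{h_B(u)}-\frac{u\,\langle\nabla h_B(u),v\rangle}{h_B(u)^2}
\]
for $v\in T_uS^n$. Since $\nabla V(a(u))$ is a scalar multiple of $x_B(u)$, $\langle u,x_B(u)\rangle=h_B(u)$, and $\nabla h_B(u)=x_B(u)$, a short computation gives
\[
\langle\nabla V(a(u)),Da(u)[v]\rangle=\langle a(u),\nabla V(a(u))\rangle\left(\frac{\langle x_B(u),v\rangle}{h_B(u)}-\frac{\langle\nabla h_B(u),v\rangle}{h_B(u)}\right)=0.
\]
Hence $D\Phi\equiv 0$ on $\mathcal{U}$; connectedness of $\mathcal{U}$ yields $\Phi\equiv c$ for some $c>0$.

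The main obstacle is the adaptation in the first step: transferring the slab estimates of Lemma~\ref{gradV} to the outer cut requires the boundedness of $\Omega\cap C(a')^+$ uniformly for $a'$ near $a$, which rests on the strict inequality $\langle a,v\rangle<0$ on $\operatorname{rec}(\Omega)\cap S^n$. A secondary technicality is the regularity of $h_B$ under only strict convexity of $B$; however, Gâteaux differentiability of $h_B$ on $\mathcal{U}$ (guaranteed by strict convexity) is sufficient for the pointwise chain-rule computation, and vanishing of $D\Phi$ at every point of the connected open set $\mathcal{U}$ implies $\Phi$ is constant on $\mathcal{U}$.
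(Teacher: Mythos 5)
Your proposal follows essentially the same route as the paper: both identify the section centroid with the unique contact point $b(a)$, use the gradient formula of Lemma~\ref{gradV} to conclude $\nabla V(a)\parallel b(a)$, and show that the derivative of $V$ vanishes along the family of admissible support covectors (the paper via the inclusion $T_a\mathcal{S}\subset b(a)^{\perp}$, you via an explicit spherical parameterization and the chain rule), finishing by connectedness. One small slip: since $B\subset C(a)^{+}$ the correct normalization is $a(u)=u/\inf_{x\in B}\langle u,x\rangle$ rather than $u/h_B(u)$ (the support hyperplane touches $B$ from below in the direction $u$), but this does not affect the computation, and your explicit handling of the $C(a)^{+}$ versus $C(a)^{-}$ sign issue when adapting Lemma~\ref{gradV} addresses a point the paper leaves implicit.
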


\noindent\textbf{Geometric preliminaries.}
Let $a$ be as in the statement, so that $H(a)$ supports $B$ and $B\subset C(a)^+$. We record two consequences of strict convexity of $B$.

\begin{itemize}
  \item[(i)] \emph{Unique contact point.} Strict convexity of $B$ implies that the support hyperplane $H(a)$ meets $B$ at a unique point, denoted $b(a)\in\partial B$. Thus $H(a)\cap B=\{b(a)\}$ and $\ip{a}{b(a)}=1$.
  \item[(ii)] \emph{Tangent space to the support parameter set.} Let
        \[
        \mathcal{S}:=\{a\in\R^{n+1}\setminus\{0\}:\ B\subset C(a)^+,\ H(a)\text{ supports }B\}.
        \]
        Since $B=\bigcap_{a\in\mathcal{S}}C(a)^+$ and the active inequality at $a$ is $\ip{a}{b(a)}\ge 1$ with equality at the unique contact point, the tangent space of $\mathcal{S}$ at $a$ satisfies
        \begin{equation}\label{eq:tangent-orthogonality}
        T_a\mathcal{S}\subset\{v\in\R^{n+1}:\ \ip{v}{b(a)}=0\}.
        \end{equation}
        In other words, $b(a)$ is normal to the hypersurface of parameters realizing the same support point.
\end{itemize}

\begin{proof}[Proof of Lemma~\ref{lem:7-adapted}]
Fix $a\in\mathcal{S}$ with $0<V(a)<\infty$. By hypothesis, the centroid of $H(a)\cap \Omega$ lies in $B$, and since it also lies on $H(a)$ we must have
\begin{equation}\label{eq:centroid-equals-contact}
x(a)=b(a).
\end{equation}
Indeed, by strict convexity $H(a)\cap B=\{b(a)\}$, so the only point of $B$ lying on $H(a)$ is $b(a)$.

By Lemma~\ref{gradV} and \eqref{eq:centroid-equals-contact}, the gradient of $V$ at $a$ is parallel to $b(a)$:
\begin{equation}\label{eq:grad-parallel}
\nabla V(a)=\lambda(a)\,b(a)\qquad\text{with}\quad \lambda(a):=\ip{a}{\nabla V(a)}\neq 0.
\end{equation}
Let $v\in T_a\mathcal{S}$. Using \eqref{eq:tangent-orthogonality} and \eqref{eq:grad-parallel},
\[
D_v V(a)=\ip{\nabla V(a)}{v}=\lambda(a)\,\ip{b(a)}{v}=0.
\]
Thus all directional derivatives of $V$ along $T_a\mathcal{S}$ vanish at $a$. Since $a$ was arbitrary, $V$ is locally constant on $\mathcal{S}$.

The set $\mathcal{S}$ of support parameters for a strictly convex body is connected (because the Gauss map of $\partial B$ is continuous and surjective onto the set of outer unit normals). Hence local constancy implies global constancy: there exists $\alpha>0$ such that
\[
V(a)=\alpha\qquad\text{for all }a\in\mathcal{S}\text{ with }0<V(a)<\infty.
\]
This is exactly the desired conclusion.
\end{proof}

\begin{proof}[Proof of Theorem~\ref{thm:main}]
An affine hypersphere has all affine normal lines either
\begin{enumerate}[label=(\roman*),ref=(\roman*)]  \item\label{case:parallel} parallel to some vector $e_{n+1}$; or
  \item\label{case:concurrent} concurrent through a single point.
\end{enumerate}

In both cases \ref{case:parallel} and \ref{case:concurrent}, we may construct $B$ in the following way:

\[
B =
\begin{cases}
\Omega + \lambda e_{n+1} & \text{in case \ref{case:parallel}},\\
\lambda\,\Omega       & \text{in case \ref{case:concurrent}},
\end{cases}
\]

For each support hyperplane $P_u$ of $B$ with normal $u\in U$, the contact point $x_0$ between $P_u$ and $B$ can be written as
\[
x_0=
\begin{cases}
N^{-1}(u)+\lambda e_{n+1} & \text{in case \ref{case:parallel}},\\[0.3em]
\lambda\, N^{-1}(u)   & \text{in case \ref{case:concurrent}},
\end{cases}
\]

for some $\lambda>0$ in case \ref{case:parallel} or $\lambda>1$ in case \ref{case:concurrent}, where $N^{-1}(u)$ denotes the point of $\partial\Omega$ with outer normal $u$. In particular, the centroid of the section $P_u\cap\Omega$ lies on the centroid line $\ell_u^\Omega$, hence lies in $B$ by construction of $B$ as a suitable translate or scaling of $\Omega$.

Thus the hypotheses of Lemma~\ref{lem:7-adapted} are satisfied with $\Omega$ and $B$ as above. We conclude that there exists a constant $\alpha>0$ such that $V(a)=\alpha$ for all support parameters $a$ corresponding to $u\in N(\partial \Omega)$.

We recall a theorem which summarizes results from Theorem~2 of~\cite{kimHyp}, and Theorem~5 of~\cite{kimPara}.

\begin{theorem}
    Let $M$ be a smooth convex hypersurface in $\R^{n+1}$ defined by the graph of the function $f:\R^n\mapsto\R$ and let $\Phi(x)$ be the plane tangent to $f(x)$. For $k\ge 0$, denote by $V_x(k)$ the volume of the region bounded between $\Phi(x)+k$ and $M$. For $k\ge 1$, denote by $V^*_x(k)$ the volume of the region bounded between $k\ \Phi(x)$ and $M$. If $V_x(k)$ is constant for every $x\in \R^n$, then $M$ is an elliptic paraboloid. If $V^*_x(k)$ is constant for every $x\in \R^n$, then $M$ is one sheet of a two-sheeted hyperboloid.
\end{theorem}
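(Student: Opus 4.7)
The overall plan is to convert each volume--constancy hypothesis into a Monge--Amp\`ere equation for $f$ and then invoke classical rigidity theorems.

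For the paraboloid case, set $\phi_x(y):=f(y)-f(x)-\nabla f(x)\cdot(y-x)\ge 0$, so that the region between $\Phi(x)+k$ and $M$ projects onto $\{\phi_x\le k\}$ with vertical thickness $k-\phi_x(y)$. Writing
\[
V_x(k)=\int_{\{\phi_x\le k\}}\bigl(k-\phi_x(y)\bigr)dy=\int_0^k \mathcal H^n\{\phi_x\le s\}\,ds,
\]
the hypothesis forces the sublevel--set volume $A_x(k):=\mathcal H^n\{\phi_x\le k\}$ to be independent of $x$. A Taylor expansion of $\phi_x$ at $y=x$ combined with the volume formula for the approximating ellipsoid gives
\[
A_x(k)=\frac{\omega_n (2k)^{n/2}}{\sqrt{\det\Hess f(x)}}+O\bigl(k^{(n+1)/2}\bigr)\quad (k\to 0^+),
\]
with $\omega_n$ the volume of the unit ball in $\R^n$. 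Comparing leading orders as $x$ varies forces $\det\Hess f\equiv\text{const}$ on all of $\R^n$; the J\"orgens--Calabi--Pogorelov theorem then yields that $f$ is a quadratic polynomial, so $M$ is an elliptic paraboloid.

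For the hyperboloid case, write $d(x):=f(x)-\nabla f(x)\cdot x$, the vertical intercept of $\Phi(x)$, which one normalizes to be positive. A direct check shows that $k\Phi(x)$ is the hyperplane parallel to $\Phi(x)$ shifted upward by $(k-1)d(x)$, whence
\[
V^*_x(k)=V_x\bigl((k-1)d(x)\bigr).
\]
Inserting the small--argument expansion $V_x(s)\sim\tilde c_n\, s^{(n+2)/2}/\sqrt{\det\Hess f(x)}$ obtained from the paraboloid analysis and letting $k\downarrow 1$, the $x$--independence of $V^*_x(k)$ becomes the Monge--Amp\`ere equation
\[
\det\Hess f(x)=C\,d(x)^{n+2}.
\]
This is precisely the PDE characterizing the graph of a proper affine hypersphere with center at the origin (as a sanity check, $f(x)=\sqrt{1+|x|^2}$ satisfies it with $C=1$), and the Calabi--Cheng--Yau classification of complete hyperbolic affine spheres admitting a global graph representation then forces $M$ to be one sheet of a two--sheeted hyperboloid.

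The principal obstacle is extracting Hessian--determinant information rigorously from the small--$k$ asymptotic: one must control the Taylor remainder in $\phi_x$ uniformly in $x$ to argue that the leading ellipsoidal term dominates, which typically requires either \emph{a priori} $C^3$ bounds on $f$ or an explicit remainder estimate tied to compactness of the relevant sublevel sets. A secondary point in the hyperbolic case is ensuring $d(x)$ has a fixed sign across $\R^n$; strict convexity of $M$ together with the implicit assumption that every tangent plane avoids the origin secures $d>0$ globally and justifies taking the $(n+2)$nd power.
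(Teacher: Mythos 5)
The paper does not actually prove this statement: it is imported verbatim from Kim's work (Theorem~2 of \cite{kimHyp} and Theorem~5 of \cite{kimPara}), so there is no internal argument to compare yours against, and I can only assess your proposal on its own terms. Your paraboloid half is sound and is the standard route: the layer-cake identity $V_x(k)=\int_0^k\mathcal{H}^n\{\phi_x\le s\}\,ds$, the small-$k$ ellipsoidal asymptotics of the sublevel sets (which do require $\det\Hess f>0$, i.e.\ strict convexity, to make sense --- worth flagging, since the statement only says ``convex''), and then the J\"orgens--Calabi--Pogorelov theorem. The reduction $V^*_x(k)=V_x\bigl((k-1)d(x)\bigr)$ is also correct.

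The hyperboloid half has a genuine gap at the final step. The leading-order asymptotics as $k\downarrow 1$ yield only the Monge--Amp\`ere equation $\det\Hess f=C\,d(x)^{n+2}$, which is precisely the equation of a \emph{proper affine hypersphere} centered at the origin, not of a hyperboloid. The ``classification of complete hyperbolic affine spheres admitting a global graph representation'' that you invoke does not exist in the form you need: the Calabi--\c{T}i\c{t}eica examples, e.g.\ $\{xyz=1,\ x,y,z>0\}\subset\R^3$, are smooth, strictly convex, entire graphs over the hyperplane $x+y+z=0$ (every line in the direction $(1,1,1)$ meets the surface exactly once), satisfy $\det\Hess f=C\,d^{n+2}$, and are not hyperboloids. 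So your PDE cannot single out the hyperboloid; any proof must exploit the constancy of $V^*_x(k)$ for all $k\ge 1$, not merely its leading asymptotics. Worse, the unimodular symmetries $(x,y,z)\mapsto(\lambda x,\mu y,z/(\lambda\mu))$ act transitively on this surface, fix the origin, and preserve volume, hence force $V^*_x(k)$ to be constant in $x$ for \emph{every} $k$; this suggests that the statement as transcribed here is missing a hypothesis from Kim's original formulation (or fails for $n\ge 2$), in which case no repair of your argument --- or of any argument --- can close the hyperbolic case exactly as stated.
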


Therefore, in case~\ref{case:parallel} $\partial\Omega$ must be an elliptic paraboloid, and in case~\ref{case:concurrent} $\partial\Omega$ is one sheet of a two-sheeted hyperboloid.

Finally, if $\partial\Omega$ is closed and bounded, the conclusion that $\partial\Omega$ is an ellipsoid is exactly the Meyer--Reisner theorem \cite{Meyer}.
\end{proof}

Notice that the proof relies on the centroid lines being either parallel or concurrent, and hence on
$\partial\Omega$ being an affine hypersphere. In fact, since a compact $\partial\Omega$ has all centroid lines intersecting at the centroid of $\Omega$, the
classification of compact affine hyperspheres implies that $\partial\Omega$ must be an ellipsoid.
By the same classification, when all centroid lines are parallel, $\partial\Omega$ must be a paraboloid. Therefore, the above result is most interesting in the hyperbolic case.\\

A resolution to question~\ref{ques:question} would follow from whether a convex hypersurface with SCCP is necessarily an affine hypersphere; this reduction is the motivation for the next sections.

\section{Asymptotic behavior of centroid lines and recession cones}\label{sec:extension}

In this section we analyze the asymptotic behavior of centroid lines. We find that for strictly convex sets with SCCP, centroid lines of $\Omega$ and of its recession cone $\operatorname{rec}(\Omega)$ have the same direction. If in addition $\partial\Omega$ is asymptotic to $\operatorname{rec}(\Omega)$, then the centroid lines of $\Omega$ and of its recession cone $\operatorname{rec}(\Omega)$ are the same. $\partial\Omega$ must then be an affine hypersphere and therefore a quadric. 

\begin{lemma}\label{lem:homothety}
Let $\Omega\subset\R^{n+1}$ be a strictly convex, unbounded set, and assume that $\Omega$ has SCCP. Let $\mathcal{C}:=\operatorname{rec}(\Omega)$ be its recession cone. Fix $u\in U:=N(\partial\Omega)$, and denote by $\ell_u^\Omega$ the centroid line associated to $\Omega$ and by $\ell_u^{\mathcal{C}}$ the centroid line associated to $\mathcal{C}$. Then
\[
\ell_u^\Omega\parallel \ell_u^{\mathcal{C}}.
\]
\end{lemma}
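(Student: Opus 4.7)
I would argue by blow-down: as the cutting hyperplane $\Pi(u,s)$ is pushed to infinity along the recession direction, the rescaled section $s^{-1}\Sigma(u,s;\Omega)$ converges to a fixed section of the recession cone $\mathcal{C}$, and the rescaled centroid converges to the centroid of that limit section. Combined with the SCCP hypothesis, this pins down the direction of $\ell_u^\Omega$.

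First I would fix the orientation. Since some section $\Sigma(u,t;\Omega)$ is bounded (otherwise $\ell_u^\Omega$ is not defined), the same compactness argument as in the proof of Lemma~\ref{gradV} gives a uniform bound $|\langle u,v\rangle|\ge\alpha>0$ on $\mathcal{C}\cap S^n$, with constant sign; after replacing $u$ by $-u$ if necessary, assume $\langle u,v\rangle>0$ on $\mathcal{C}\setminus\{0\}$. Then every section $\Sigma(u,s;\mathcal{C})$ with $s>0$ is a bounded convex set, and the cone-scaling identity $\Sigma(u,s;\mathcal{C})=s\,\Sigma(u,1;\mathcal{C})$ gives
\[
\mathrm{cen}(u,s;\mathcal{C})=s\,c_0,\qquad c_0:=\mathrm{cen}(u,1;\mathcal{C}),
\]
with $c_0\neq 0$ because $c_0\in\Pi(u,1)$. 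Hence $\ell_u^\mathcal{C}$ is the line through the origin in the direction $c_0$.

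Next I would invoke the blow-down. By Definition~\ref{def:blowdown}, $s^{-1}(\Omega-x_0)\to\mathcal{C}$ in the Painlev\'e--Kuratowski sense for any $x_0\in\Omega$. Slicing with the hyperplane $\Pi(u,1)$ and using $\langle u,\cdot\rangle\ge\alpha$ on $\mathcal{C}\cap S^n$ to rule out escape to infinity uniformly in $s$ upgrades this to Hausdorff convergence
\[
s^{-1}\bigl(\Sigma(u,s;\Omega)-x_0\bigr)\xrightarrow[s\to\infty]{d_H}\Sigma(u,1;\mathcal{C}).
\]
The centroid is continuous along such convergence of convex sets, giving $s^{-1}\mathrm{cen}(u,s;\Omega)\to c_0$. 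By SCCP all the points $\mathrm{cen}(u,s;\Omega)$ lie on the single affine line $\ell_u^\Omega$, and the asymptotic equivalent $\mathrm{cen}(u,s;\Omega)\sim s\,c_0$ forces the direction vector of $\ell_u^\Omega$ to be proportional to $c_0$: for $s_1<s_2$ large, the difference $\mathrm{cen}(u,s_2;\Omega)-\mathrm{cen}(u,s_1;\Omega)$ is a direction vector of $\ell_u^\Omega$, and after dividing by $s_2-s_1$ and passing to the limit one gets $c_0$. Hence $\ell_u^\Omega\parallel\ell_u^\mathcal{C}$.

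I expect the main obstacle to be the soft-analytic step: justifying Hausdorff convergence of the rescaled slices together with continuity of the centroid along this convergence, uniformly across the cases where $\mathcal{C}$ is full-dimensional and where $\mathcal{C}$ is lower-dimensional (so that the limit slice may degenerate to a lower-dimensional set or even a single point). In all cases the key ingredient is the uniform positivity $\langle u,\cdot\rangle\ge\alpha>0$ on $\mathcal{C}\cap S^n$, which prevents mass from escaping to infinity either in $\Omega$ or in its recession cone.
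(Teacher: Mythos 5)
Your proof is correct and takes essentially the same route as the paper's: blow down $\Omega$ to its recession cone, obtain Hausdorff convergence of the rescaled slices and hence convergence of their centroids to the cone's centroid ray, and then use the affine parametrization of $\ell_u^\Omega$ supplied by SCCP to identify the direction vectors. The paper fixes the level $t$ and rescales $\Omega$ by $1/R$ rather than rescaling the section at height $s$ by $1/s$, but this is the same computation; your extra care with the orientation of $u$, the uniform positivity of $\langle u,\cdot\rangle$ on $\mathcal{C}\cap S^n$, and the possible degeneracy of the limit slice only makes explicit points the paper leaves implicit.
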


\begin{proof}
By SCCP, $\ell_u$ depends affinely on $t$:
\[
\ell_u(t)=q_u+t\,w_u,\qquad t\in I(u),
\]
for some $q_u,w_u\in\R^{n+1}$. 

For the cone $\mathcal{C}=\operatorname{rec}(\Omega)$, whenever the sections $\Sigma(u,t;\mathcal{C})$ have finite measure, their centroids lie on a line $\ell_u^{\mathcal{C}}$ passing through the apex of $\mathcal{C}$. Since $\mathcal{C}$ is a cone, the sections $\Sigma(u,t;\mathcal{C})$ are homothetic as $t$ varies, and their centroids scale linearly:
\[
\ell_u^{\mathcal{C}}(t)=t\,w_u^{\mathcal{C}},
\]
for some $w_u^{\mathcal{C}}\in\R^{n+1}$.

Now, let $R>0$ and consider the scaled sets
\[
\Omega_R:=\frac{1}{R}\Omega.
\]
Since $\Omega$ is closed and convex, the asymptotic cone
\[
\Omega_\infty:=\lim_{R\to\infty}\frac{1}{R}\Omega
\]
exists and coincides with the recession cone:
\[
\Omega_\infty = \operatorname{rec}(\Omega) = \mathcal{C}.
\]
In particular, for any fixed $t>0$ and $u\in N(\partial\Omega)$,
\[
\Omega_R\cap\Pi(u,t) \;\xrightarrow[R\to\infty]{d_H}\; \mathcal{C}\cap\Pi(u,t)
\]
in Hausdorff distance inside the hyperplane $\Pi(u,t)$. Since each slice is a bounded convex subset of $\Pi(u,t)$ with positive finite $n$--measure, convergence in Hausdorff distance implies convergence of centroids:
\[
\ell_u^{\Omega_R}(t)\xrightarrow[R\to\infty]{}\ell_u^\mathcal{C}(t)
=t\,w_u^{\mathcal{C}}.
\]

On the other hand, the section of $\Omega_R$ at level $t$ is just a rescaled section of $\Omega$ at level $Rt$:
\[
\Omega_R\cap\Pi(u,t)
=\frac{1}{R}\bigl(\Omega\cap\Pi(u,Rt)\bigr),
\]
so its centroid is
\[
\ell_u^{\Omega_R}
=\frac{1}{R}\,\ell_u(Rt)
=\frac{1}{R}\bigl(q_u+Rt\,w_u\bigr)
=\frac{1}{R}q_u + t\,w_u.
\]
Letting $R\to\infty$ we obtain
\[
\ell_u^{\Omega_R}\xrightarrow[R\to\infty]{} t\,w_u.
\]

Combining the two limits, we get
\[
t\,w_u = t\,w_u^{\mathcal{C}}\qquad\text{for all }t>0,
\]
hence $w_u=w_u^{\mathcal{C}}$. Therefore the direction vector of the centroid line $\ell_u^\Omega$ coincides with that of $\ell_u^{\mathcal{C}}$, and
\[
\ell_u^\Omega\parallel \ell_u^{\mathcal{C}}. \qedhere
\]
\end{proof}

\begin{proof}[Proof of Theorem~\ref{thm:thm2}]
If $\operatorname{rec}(\Omega)=\{0\}$, then $\Omega$ is bounded, and the Meyer--Reisner theorem \cite{Meyer} implies that $\partial\Omega$ is an ellipsoid. 
Hence assume $\operatorname{rec}(\Omega)\neq\{0\}$, and write $\mathcal C:=\operatorname{rec}(\Omega)$.

By Lemma~\ref{lem:homothety}, for each admissible normal direction $u\in N(\partial\Omega)$, the centroid line $\ell_u^\Omega$ is parallel to the corresponding centroid line $\ell_u^{\mathcal C}$ of the cone $\mathcal C$.

\smallskip

\noindent\emph{Case 1: $\dim\mathcal C=1$.}
Then $\mathcal C$ is a ray, so every line $\ell_u^{\mathcal C}$ has the same direction. 
Consequently all centroid lines $\ell_u^\Omega$ are parallel. This means the affine normal lines of $\partial\Omega$ are parallel as well; thus $\partial\Omega$ is an improper affine hypersphere. 
Applying Theorem~\ref{thm:main} yields that $\partial\Omega$ is an elliptic paraboloid.

\smallskip

\noindent\emph{Case 2: $\dim\mathcal C=n+1$ and $\partial\Omega$ is asymptotic to $\mathcal C$.}
Fix $u\in N(\partial\Omega)$. 
As $t\to\infty$, asymptotic convergence of $\partial\Omega$ to $\partial\mathcal C$ implies that the bounded sections $\Sigma(u,t;\Omega)$ converge (in Hausdorff distance inside $\Pi(u,t)$) to $\Sigma(u,t;\mathcal C)$. 
In particular, the centroids satisfy
\[
\ell_u^\Omega(t)-\ell_u^\mathcal{C}(t)\to 0
\qquad (t\to\infty).
\]
This forces $\ell_u^\Omega=\ell_u^{\mathcal C}$. 
Therefore all centroid lines $\ell_u^\Omega$ are concurrent at the vertex of $\mathcal C$, meaning the affine normals of $\partial\Omega$ are concurrent, so $\partial\Omega$ is a proper affine hypersphere. 
Applying Theorem~\ref{thm:main} yields that $\partial\Omega$ is a sheet of a two-sheeted hyperboloid.
\end{proof}

\section{Conclusion and open problems}

Settling Question~\ref{ques:question} amounts to answering whether the following dichotomy is true:

\medskip

\noindent
Let $\Omega\subset\R^{n+1}$ be connected with smooth strictly convex boundary $\partial\Omega$ and satisfying SCCP. Does necessarily
\[
\bigcap_{u\in N(\partial\Omega)}\ell_u^\Omega = \{p\}
\]
for some $p\in\R^{n+1}$, or else
\[
\ell_{u_1}^\Omega\parallel \ell_{u_2}^\Omega\qquad\forall u_1,u_2\in U \text{ ?}
\]
There are four subcases, two of which are settled, while two remain open.

\subsection*{Known cases}

\paragraph{Case 1: full-dimensional recession cone, $\Omega$ asymptotic to its cone.}
Assume that $\Omega$ has an $(n+1)$--dimensional recession cone $\mathcal{C}=\operatorname{rec}(\Omega)$ and that $\partial\Omega$ is asymptotic to $\mathcal{C}$. By Theorem~\ref{thm:thm2}, for each $u\in U$ the centroid line $\ell_u^\Omega$ coincides with the centroid line of $\mathcal{C}$, so
\[
\bigcap_{u\in N(\partial\Omega)}\ell_u^\Omega = \{p\},
\]
where $p$ is the vertex of $\operatorname{rec}(\Omega)$. In this case $\partial\Omega$ is asymptotic to a cone and shares its centroid lines.

\paragraph{Case 2: one-dimensional recession cone.}
If $\dim\operatorname{rec}(\Omega)=1$, then $\operatorname{rec}(\Omega)$ is a ray, 
\[
\ell_{u_1}^\Omega\parallel \ell_{u_2}^\Omega\qquad\forall u_1,u_2\in U,
\]
and by Theorem~\ref{thm:thm2} $\partial\Omega$ is a paraboloid.

\subsection*{Open cases}

The remaining two cases are, to our knowledge, open and contain the core difficulty of classifying convex hypersurfaces with SCCP.

\paragraph{Case 3: full-dimensional recession cone, not asymptotic to the cone.}
Assume that $\operatorname{rec}(\Omega)$ has full dimension $(n+1)$ but that $\partial\Omega$ is \emph{not} asymptotic to $\partial\operatorname{rec}(\Omega)$. By Cheng-Yau's solution of the Calabi conjecture, there exists a hyperbolic affine hypersphere $Y$ asymptotic to $\operatorname{rec}(\Omega)$, whose affine normal at the point with Euclidean normal $u$ is parallel to $\ell_u^\Omega$. The crucial question is whether this forces $\partial\Omega$ itself to be an affine hypersphere. If so, Theorem~\ref{thm:thm2} would apply and the classification would be complete in this case.

\paragraph{Case 4: intermediate-dimensional recession cone.}
Finally, suppose that $\operatorname{rec}(\Omega)$ has dimension $m$ with $1<m<n+1$. In this situation the SCCP implies that all centroid lines $\ell_u^\Omega$ are parallel to a fixed $m$--dimensional subspace $V$, but neither concurrency at a single point nor mutual parallelism of all centroid lines is automatic.

\section*{Acknowledgments}
I thank Jacob Ogden for his generous mentorship throughout this project. Our long and insightful meetings, his continued support, and his contributions were invaluable.

\bibliographystyle{plain}
\bibliography{mybib}

\noindent {Department of Mathematics, Padelford Hall\\
University of Washington, Seattle, WA 98195
}\\
Email: \texttt{aboren@uw.edu}

\end{document}